\newcommand{\mm}{\mathfrak m}
\newcommand{\nn}{\mathfrak n}
\newcommand{\Z}{\mathbb{Z}}
\newcommand{\N}{\mathbb{N}}
\newcommand{\Q}{\mathbb{Q}}
\newcommand{\Fc}{\mathcal{F}}
\DeclareMathOperator{\pnt}{\raise 0.5mm \hbox{\large\bf.}}
\DeclareMathOperator{\depth}{depth}
\DeclareMathOperator{\Img}{Im}
\DeclareMathOperator{\gr}{gr}
\DeclareMathOperator{\Tor}{Tor}
\DeclareMathOperator{\Ass}{Ass}
\DeclareMathOperator{\lind}{ld}
\DeclareMathOperator{\glind}{gl\,ld}
\DeclareMathOperator{\linp}{lin}
\DeclareMathOperator{\Rees}{Rees}
\DeclareMathOperator{\pdeg}{pdeg}
\DeclareMathOperator{\projdim}{pd}
\def\+#1{\relax\ifmmode\if\noexpand #1\relax \mathop{\kern
    0pt^+{#1}}\nolimits\else \kern 0pt^+\!#1 \fi\else$^*$#1\fi}
\newcommand{\vphi}{\varphi}
\let\phi=\varphi
\newtheorem{thm}{\bf Theorem}[section]
\newtheorem{lem}[thm]{\bf Lemma}
\newtheorem{cor}[thm]{\bf Corollary}
\theoremstyle{definition}
\newtheorem{defn}[thm]{\bf Definition}
\theoremstyle{plain}
\newtheorem*{thm*}{Theorem}
\newtheorem*{lem*}{Lemma}
\newtheorem*{cor*}{Corollary}
\newtheorem*{claim*}{Claim}
\newtheorem*{defn*}{Definition}
\theoremstyle{remark}
\newtheorem{rem}[thm]{Remark}
\newtheorem{ex}[thm]{Example}
\numberwithin{equation}{section}
\title{On the asymptotic behavior of the linearity defect}
\author{Hop D. Nguyen}
\address{Fachbereich Mathematik/Informatik, Institut f\"ur Mathematik, Universit\"at Osnabr\"uck, Albrechtstr. 28a, 49069 Osnabr\"uck, Germany}
\address{Dipartimento di Matematica, Universit\`a di Genova, Via Dodecaneso 35, 16146 Genoa, Italy}
\email{ngdhop@gmail.com}
\author{Thanh Vu}
\address{Department of Mathematics, University of Nebraska-Lincoln, Lincoln, NE 68588, USA}
\email{tvu@unl.edu}
\thanks{This work is partially supported by the NSF grant DMS-1103176.}
\subjclass[2010]{13D02, 13D05, 13H99}
\keywords{Powers of ideals; linearity defect; asymptotic behavior}
\begin{document}

\begin{abstract}
This work concerns the linearity defect of a module $M$ over a noetherian local ring $R$, introduced by Herzog and Iyengar in 2005, and denoted by $\lind_R M$. Roughly speaking, $\lind_R M$ is the homological degree beyond which the minimal free resolution of $M$ is linear. In the paper, it is proved that for any ideal $I$ in a regular local ring $R$ and for any finitely generated $R$-module $M$,
each of the sequences $(\lind_R (I^nM))_n$ and $(\lind_R (M/I^nM))_n$ is eventually constant. The first statement follows from a more general result about the eventual constancy of the sequence $(\lind_R C_n)_n$ where $C$ is a finitely generated graded module over a standard graded algebra over $R$. The second statement follows from the first together with a result of Avramov on small homomorphisms.
\end{abstract}

\maketitle

\section{Introduction}
\label{sect_intro}
Let $(R,\mm)$ be a commutative noetherian local ring with maximal ideal $\mm$ and residue field $k=R/\mm$. Let $I\subseteq \mm$ be an ideal of $R$. The study of the asymptotic behavior of, for instances, graded components of a graded module, or powers of an ideal, is a classical topic in commutative algebra. A primal example of good asymptotic behavior is that if $I$ is $\mm$-primary, then the Hilbert-Samuel function of $I$, given by $n \mapsto \text{length of}~ (R/I^n)$, is eventually a polynomial function when $n$ tends to infinity. To give another example, Brodmann \cite{Br} proved that for any $I$, the sequence $(\depth (R/I^n))_n$ is eventually constant for $n$ large enough. Aside from qualitative statements, for each invariant, it is also of interest to study the point when nice asymptotic behaviors of powers occur and the nature of the limiting values at high enough powers. See, for example, \cite{Ch}, \cite{HH2} and the references therein for more details.

In this paper, we study the linearity defect introduced by Herzog and Iyengar \cite{HIy} (see Section \ref{sect_linearitydefect}). One of the motivations for studying the linearity defect is the research on the linear part of minimal free resolutions over the exterior algebras in \cite{EFS}. Similarly to the projective dimension, the finiteness of the linearity defect has strong consequences on the structure of a module: If $\lind_R M$ is finite, then the Poincar\'e series of $M$ is rational with denominator depending only on $R$ (see \cite[Proposition 1.8]{HIy}). However, there remain many open questions on the finiteness of the linearity defect; see \cite{AR}, \cite{CINR}, \cite{Se} for more details.

The linearity defect was studied by many authors, see for example \cite{AR}, \cite{CINR}, \cite{HIy}, \cite{IyR}, \cite{OkaYan}, \cite{Se}, \cite{Y}. Nevertheless, it is still an elusive invariant. The problem is highly non-trivial as to bound efficiently the linearity defect even for familiar classes of ideals like monomial ideals. Beyond componentwise linear ideals \cite{HH} (which have linearity
defect zero), there are few interesting and large enough classes of ideals whose linearity defect is known. In this paper, the above remarks notwithstanding, we show that the linearity defect behaves in a pleasant way asymptotically. Denote by $\glind R=\sup\{\lind_R M: ~\textnormal{$M$ is a finitely generated $R$-module}\}$ the so-called {\em global linearity defect} of $R$. For instances, regular rings (or more generally local rings which are both Koszul and Golod, see \cite[Corollary 6.2]{HIy}) have finite global linearity defect. The main result  of this paper is
\begin{thm}
\label{thm_main1}
Let $(R,\mm)$ be a local ring such that $\glind R<\infty$, e.g., $R$ is regular. Let $I\subseteq \mm$ be an ideal of $R$ and $M$ a finitely generated $R$-module. Then for all $n$ large enough, each of the numbers $\lind_R (I^nM), \lind_R (I^nM/I^{n+1}M)$, and $\lind_R (M/I^nM)$ is a constant depending only on $I$ and $M$.
\end{thm}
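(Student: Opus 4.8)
The plan is to derive all three assertions from a single general statement about the graded components of a finitely generated graded module. For $\lind_R(I^nM)$ I would pass to the Rees algebra $\Rees_I(R)=\bigoplus_{n\ge0}I^n$ and the Rees module $\bigoplus_{n\ge0}I^nM$: the latter is a finitely generated graded module over the standard graded $R$-algebra $\Rees_I(R)$ whose $n$-th graded component is $I^nM$. For $\lind_R(I^nM/I^{n+1}M)$ I would instead use the associated graded ring $\gr_I(R)=\bigoplus_{n\ge0}I^n/I^{n+1}$ and the module $\gr_I(M)=\bigoplus_{n\ge0}I^nM/I^{n+1}M$. Thus both reduce to the general claim: \emph{if $S$ is a standard graded algebra over a local ring $R$ with $\glind R<\infty$ and $C$ is a finitely generated graded $S$-module, then $(\lind_R C_n)_n$ is eventually constant.}

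To prove the general claim I would first observe that $\glind R<\infty$ gives $\lind_R k<\infty$, hence $\lind_R k=0$ (the linearity defect of the residue field is known to be either $0$ or $\infty$), so $R$ is Koszul and $G:=\gr_\mm R$ is a Koszul algebra. The point is then to encode the minimal $R$-free resolutions of all the $C_n$ simultaneously. Filtering $C$ by $\mm^jC$ restricts, in each $S$-degree $n$, to the $\mm$-adic filtration of $C_n$; hence the associated graded modules $\gr_\mm(C_n)$ fit together into one finitely generated bigraded module $N=\bigoplus_{j,n}\mm^jC_n/\mm^{j+1}C_n$ over the Noetherian bigraded ring $\gr_{\mm S}(S)$, whose degree-$(*,0)$ part is the Koszul algebra $G$. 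Since $\lind_R(C_n)$ is governed by a homological invariant of $\gr_\mm(C_n)$ over $G$ — essentially the top nonvanishing degree of the homology of its linear part — I would try to show this invariant is eventually constant in $n$ by combining Noetherianity of $N$ with the good behavior of homological invariants of finitely generated modules over the Koszul algebra $G$.

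\textbf{The main obstacle} lies in making the previous paragraph precise on two fronts. First, one needs a two-sided comparison: a mere upper bound on $\lind_R(C_n)$ by a stabilizing graded quantity is not enough, so one must realize the linear part of the \emph{minimal} $R$-free resolution of $C_n$, for instance by lifting a minimal $G$-resolution of $\gr_\mm(C_n)$ back to $R$, and verify that equality holds for $n\gg0$. Second, for Castelnuovo--Mumford regularity the analogous quantity is only eventually linear in $n$, not constant; here the saving grace is that a linearity defect cannot grow, so once the controlling invariant is bounded along the finitely generated module $N$ it must in fact stabilize. Obtaining both features at once is the crux of the argument.

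Finally I would reduce the case of $M/I^nM$ to the first statement via the exact sequences $0\to I^nM\to M\to M/I^nM\to0$. Because $I\subseteq\mm$, we have $I^nM\subseteq\mm^nM$, so the mapping cone of a lift of minimal free resolutions $F_\bullet(I^nM)\to F_\bullet(M)$ is again the minimal free resolution of $M/I^nM$; and by a uniform Artin--Rees argument — which is exactly where a result of Avramov on small homomorphisms enters, guaranteeing the required smallness for every $n\gg0$ — this comparison map has no linear part once $n$ is large, so the linear part of the resolution of $M/I^nM$ splits as a direct sum of complexes. It follows that $\lind_R(M/I^nM)=\max\{\lind_R M,\ \lind_R(I^nM)+1\}$ for $n\gg0$, which is eventually constant by the general claim applied to the Rees module; this also identifies the limiting value. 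The numerics here are routine once the splitting is in hand, the only delicate input being the uniform bound behind the smallness hypothesis.
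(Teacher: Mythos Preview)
Your overall architecture matches the paper exactly: reduce the first two sequences to a general statement about graded components of a finitely generated module over a standard graded $R$-algebra (via the Rees module and $\gr_I M$), and handle $M/I^nM$ by Avramov's small-homomorphism result plus a mapping-cone argument yielding $\lind_R(M/I^nM)=\max\{\lind_R M,\ \lind_R(I^nM)+1\}$ for $n\gg0$. That part is fine and is precisely what the paper does.

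The gap is in your proposed proof of the general claim. The paper does \emph{not} pass to the Koszul algebra $G=\gr_\mm R$ or to a bigraded associated-graded module; instead the key input you are missing is \c{S}ega's characterization: $\lind_R M\le d$ if and only if the natural maps $\Tor^R_i(R/\mm^{q+1},M)\to\Tor^R_i(R/\mm^q,M)$ vanish for all $i>d$ and all $q\ge0$. After replacing $S$ by a polynomial ring over $R$, flatness gives $\Tor^R_i(R/\mm^q,C_n)\cong\Tor^S_i(S/\mm^qS,C)_n$, so the question becomes whether the degree-$n$ strand of finitely many graded $S$-module maps is zero. Artin--Rees on the $S$-syzygies of $C$ reduces the infinitely many $q$'s to finitely many, and then a simple ``periodicity degree'' argument (for $n$ past an explicit bound, each $\Img\mu^{i,q}$ is either zero or nonzero in \emph{every} degree) finishes it. This gives not only eventual constancy but an effective bound on where it starts.

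Your route, by contrast, runs into the very obstacles you flag, and they are genuine. The linear part of the minimal $R$-resolution of $C_n$ is \emph{not} an invariant of $\gr_\mm(C_n)$ alone---two $R$-modules with the same associated graded can have different linearity defects---so ``lifting a minimal $G$-resolution back to $R$'' does not recover $\linp^R F$ in general, and there is no evident mechanism by which Noetherianity of your bigraded $N$ controls the top nonvanishing homology of $\linp^R F$ in each degree. Separately, the implication ``$\lind_R k<\infty\Rightarrow\lind_R k=0$'' that you invoke to get Koszulness is, for arbitrary local rings, an open question of Herzog--Iyengar, so you cannot use it as a black box here. The \c{S}ega/Artin--Rees argument avoids both issues entirely.
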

The proof essentially depends on a characterization of linearity defect in terms of Tor due to \c{S}ega \cite{Se}, and the theory of Rees algebras. Theorem \ref{thm_main1} can be divided into two parts: the first concerns with the sequences $(\lind_R (I^nM))_n, (\lind_R (I^nM/I^{n+1}M))_n$ and the second with $(\lind_R (M/I^nM))_n$. The proof of the second part is obtained from the proof of the first and a result of Avramov \cite{Avr1} on small homomorphisms. The first part is obtained from the following general result. Below, recall that $S$ is called a standard graded algebra over $R$ if it is an $\N$-graded ring with $S_0=R$ and $S$ is generated over $R$ by elements of $S_1$.
\begin{thm}
\label{thm_largecomponents}
Let $(R,\mm)$ be a local ring such that $\glind R<\infty$. Let $S$ be a standard graded algebra over $R$ and $C$ a finitely generated graded $S$-module. Then the sequence $(\lind_R C_n)_n$ is eventually constant.
\end{thm}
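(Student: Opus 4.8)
The plan is to reduce the statement, by means of \c{S}ega's description of the linearity defect in terms of $\Tor$ \cite{Se}, to a vanishing question for a single finitely generated bigraded module over a bigraded ring built from $S$ and the extended Rees algebra of $\mm$, and then to settle that question using standard graded structure. Set $g:=\glind R<\infty$. Each component $C_n$ is a finitely generated $R$-module, so $\lind_R C_n\le g$, and hence $(\lind_R C_n)_n$ takes only finitely many values; moreover we may assume $C_n\neq 0$ for all $n$, since $\{n:C_n=0\}$ is finite or cofinite (for $n$ large, $C_n=S_1C_{n-1}$) and the cofinite case is trivial. For $-1\le d\le g$ put $A_d:=\{n:\lind_R C_n\le d\}$, with $A_{-1}=\varnothing$, so that $A_{-1}\subseteq A_0\subseteq\cdots\subseteq A_g=\N$. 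If every $A_d$ is finite or cofinite, then choosing $d_0$ minimal with $A_{d_0}$ cofinite makes $A_{d_0-1}$ finite, whence $\lind_R C_n=d_0$ for all $n\gg0$; so it suffices to prove this dichotomy for each $d$.

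By \c{S}ega's criterion \cite{Se}, for a finitely generated $R$-module $M$ one has $\lind_R M\le d$ if and only if the natural maps $\mu^j_i(M)\colon\Tor^R_i(M,R/\mm^{j+1})\to\Tor^R_i(M,R/\mm^j)$ vanish for all $i>d$ and all $j\ge1$. Since $\lind_R C_n\le g$, the maps $\mu^j_i(C_n)$ vanish automatically for $i>g$, so $\lind_R C_n\le d$ is equivalent to $\mu^j_i(C_n)=0$ for all $d<i\le g$ and all $j\ge1$. It therefore suffices to prove, for each fixed $i\in\{1,\dots,g\}$, that $\{n:\mu^j_i(C_n)=0\text{ for all }j\ge1\}$ is finite or cofinite; intersecting over $i\in\{d+1,\dots,g\}$ then yields the dichotomy for $A_d$.

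Fix such an $i$, and let $E:=R[\mm t,t^{-1}]=\bigoplus_{j\in\Z}\mm^{[j]}t^j\subseteq R[t,t^{-1}]$ be the extended Rees algebra of $\mm$, where $\mm^{[j]}=\mm^j$ for $j\ge0$ and $\mm^{[j]}=R$ for $j<0$; it is a noetherian $\Z$-graded $R$-algebra. Consider the bigraded module $N:=\Tor^R_{i-1}(C,E)$ over $B:=S\otimes_RE$, with first grading inherited from $C$ and second from $E$, so $N_{(n,j)}=\Tor^R_{i-1}(C_n,\mm^{[j]})$. For $i\ge2$, the exact sequences $0\to\mm^j\to R\to R/\mm^j\to0$ ($j\ge1$) together with the freeness of $\mm^{[j]}=R$ ($j\le0$) give natural isomorphisms $N_{(n,j)}\cong\Tor^R_i(C_n,R/\mm^j)$ for $j\ge1$ and $N_{(n,j)}=0$ otherwise; chasing the corresponding ladder of exact sequences shows that multiplication by $t^{-1}\in E_{-1}$ on $N$ realizes, slice by slice, exactly the maps $\mu^j_i(C_n)$. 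Hence $\mu^j_i(C_n)=0$ for all $j$ if and only if $t^{-1}$ annihilates the $n$-th slice of $N$, i.e.\ $(t^{-1}N)_{(n,\bullet)}=0$. (For $i=1$ one uses instead the kernel of the $E$-linear map $C\otimes_RE\to C\otimes_RR[t,t^{-1}]$, whose $(n,j)$-component is $\Tor^R_1(C_n,R/\mm^j)$ for $j\ge1$; what follows applies to it without change.) The crucial point is that $N$ is finitely generated over $B$: presenting $E$ as a quotient $P/\mathfrak{b}$ of a polynomial ring $P$ over $R$, which is $R$-flat, one takes a resolution of $E$ by finitely generated graded free $P$-modules, tensors it over $R$ with $C$, and observes that the resulting complex of finitely generated $(S\otimes_RP)$-modules computes $N$, whose $P$-action factors through $E$; thus $N$ is finitely generated over $B=(S\otimes_RP)/(S\otimes_R\mathfrak{b})$, which is noetherian, and so $t^{-1}N$ (and the kernel module in the case $i=1$) is a finitely generated bigraded $B$-module.

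Finally, since $S$ is standard graded over $R$, the ring $B=S\otimes_RE$ is standard graded in its first grading over $B_{(0,\bullet)}=E$, with $B_{(1,0)}=S_1$ finitely generated over $R$. Hence for any finitely generated bigraded $B$-module $Q$ there is an $n_0$ with $Q_{(n+1,j)}=B_{(1,0)}Q_{(n,j)}$ for all $n\ge n_0$ and all $j$; therefore, once a slice $Q_{(n,\bullet)}$ vanishes for some $n\ge n_0$ it vanishes for all larger $n$, so $\{n:Q_{(n,\bullet)}=0\}$ is finite or cofinite. Applying this to $Q=t^{-1}N$ (and to the kernel module when $i=1$) gives the required dichotomy for each $i$, and hence the theorem. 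I expect the principal obstacle to lie in this interface between homological algebra and the two gradings---specifically, in establishing that $\Tor^R_{i-1}(C,E)$ is finitely generated over $S\otimes_RE$, which forces the detour through a polynomial presentation of $E$ because neither $C$ nor $E$ is finitely generated over $R$, and in verifying that multiplication by $t^{-1}$ on it is literally \c{S}ega's comparison map. The hypothesis $\glind R<\infty$ enters only, but essentially, to restrict $i$ to finitely many values.
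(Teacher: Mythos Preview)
Your argument is correct and genuinely different from the paper's. Both proofs start from \c{S}ega's criterion, but the paper first replaces $S$ by a polynomial ring over $R$, uses flatness to rewrite $\Tor^R_i(R/\mm^q,C_n)$ as $\Tor^S_i(S/\mm^qS,C)_n$, applies the Artin--Rees lemma to the $S$-syzygies of $C$ to reduce to finitely many values of $q$, and then controls each $\Img\mu^{i,q}$ via the function $\pdeg_S$; this yields an explicit bound $N(C)$ beyond which $\lind_R C_n$ is constant. Your route instead packages all $q$ at once into the extended Rees algebra $E=R[\mm t,t^{-1}]$, realizes the whole family of \c{S}ega maps as multiplication by $t^{-1}$ on the single bigraded module $\Tor^R_{i-1}(C,E)$ over $S\otimes_RE$, and finishes with a clean noetherian/standard-graded argument on $t^{-1}N$. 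What you gain is a shorter and more conceptual proof that avoids the reduction to polynomial $S$ and hides Artin--Rees inside the noetherianity of $E$; what the paper gains is an effective bound $N(C)$, expressed in terms of Artin--Rees numbers of the $S$-syzygies of $C$ and $\pdeg_S$ of finitely many images $\Img\mu^{i,q}$, which it then computes in an explicit example. Your closing remark is apt: the finite-generation step for $\Tor^R_{i-1}(C,E)$ over $S\otimes_RE$ is the one place that needs care, and your detour through a polynomial presentation of $E$ (yielding an $R$-flat resolution of $E$ by finitely generated $P$-modules) handles it correctly.
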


Theorem \ref{thm_largecomponents} is motivated by previous work of Herzog and Hibi \cite[Theorem 1.1]{HH2} on depth. In Theorem \ref{thm_largecomponents}, if we allow $S$ to be generated by finitely many elements of positive degrees, then $(\lind_R C_n)_n$ is quasiperiodic (namely, there exist an integer $p\ge 1$ and numbers $\ell_0,\ldots,\ell_{p-1}$ such that for all $n$ large enough, $\lind_R C_n=\ell_i$ if $n$ is congruent to $i$ modulo $p$, where $0\le i\le p-1$). In the end of Section \ref{sect_largepowers}, we turn to the asymptotic linearity defect of the integral closure $\overline{I^n}$ and the saturation $\widetilde{I^n}$ of $I^n$. Theorem \ref{thm_largecomponents} is also suitable for studying the sequence $(\lind_R \overline{I^n})_n$, at least over regular local rings. We give an example in which the sequence $(\lind_R \widetilde{I^n})_n$ is quasiperiodic but not eventually constant. We do not know any example for which $(\lind_R \widetilde{I^n})_n$ is non-quasiperiodic.

Theorem \ref{thm_main1} implies that the sequence $(\lind_R I^n)_n$ is eventually constant. It remains mysterious to us how to bound effectively the asymptotic value of the sequence $(\lind_R I^n)_n$. A rare result in this direction is \cite[Theorem 2.4]{HHO}. There the authors establish a necessary and sufficient condition for all the powers of a polynomial ideal to have linearity defec zero, using the theory of $d$-sequences \cite{Hu}. It would be interesting to study possible generalizations and analogues of this result.

We assume that the reader is versed with the standard concepts and facts of commutative algebra, which may be found in \cite{BH}, \cite{Eis}. 
\section{Linearity defect}
\label{sect_linearitydefect}
Let $(R,\mm)$ be a commutative noetherian local ring with residue field $k$. Let $I$ be a proper ideal of $R$. Let $M$ be a finitely generated $R$-module. We call $\displaystyle \gr_I M=\bigoplus_{j\ge 0} \frac{I^jM}{I^{j+1}M}$ the associated graded module of $M$ with respect to $I$.

Let $F$ be the minimal free resolution of $M$:
\[
\xymatrixcolsep{5mm}
\xymatrixrowsep{2mm}
\xymatrix{
F:	\cdots \ar[rr] &&  F_i \ar[rr]&&  F_{i-1}  \ar[rr]&&  \cdots \ar[rr] && F_1\ar[rr] && F_0 \ar[rr]&& 0.
}
\]
Since $\partial(F)\subseteq \mm F$ the (by homological degree) graded submodule
\[
\xymatrixcolsep{5mm}
\xymatrixrowsep{2mm}
\xymatrix{
\Fc^iF: 	\cdots \ar[rr] && F_{i+1} \ar[rr] &&  F_i \ar[rr]&&  \mm F_{i-1}  \ar[rr]&&  \cdots \ar[rr] && \mm^{i-j} F_j  \ar[rr]&&  \cdots
}
\]
of $F$ is stable under the differential; said otherwise, $\Fc^iF$ is a subcomplex of $F$.

We define the so-called {\em linear part} of $F$ by the formula
$$
\linp^R F:=\bigoplus_{i=0}^{\infty} \frac{\Fc^iF}{\Fc^{i+1}F}.
$$
Note that $\linp^R F$ is a complex of graded modules over $\gr_{\mm}R$, and $(\linp^R F)_i=(\gr_{\mm}F_i)(-i)$ for every $i\ge 0$.  Following \cite{HIy}, the {\it linearity defect} of $M$ is defined by
\[
\lind_R M:=\sup\{i: H_i(\linp^R F)\neq 0\}.
\]
If $M\cong 0$, we set $\lind_R M=0$. This convention guarantees that the maximal ideal $(0)$ of the field $k$ has linearity defect zero.
\begin{rem}
(1) The definition of the linear part works also for standard graded algebras. Let $(R,\mm)$ be a standard graded algebra over a field $k$, with the graded maximal ideal $\mm$, and $M$ a finitely generated graded $R$-module. Let $F$ be the minimal graded free resolution of $M$ over $R$. Then the above construction of the linear part goes through for $F$.

In this case, the linear part $\linp^R F$ has a simple interpretation. For each $i\ge 1$, apply the following rule to all entries in the matrix representing the map $F_i\to F_{i-1}$: keep it if it is a linear form, and replace it by $0$ otherwise. Then the resulting complex is $\linp^R F$.

(2) Although our results deal only with local rings, their counterparts for standard graded algebras are also valid and can be established by the same methods.
\end{rem}
\section{Asymptotic behavior of the linearity defect}
\label{sect_largepowers}
Let $(R,\mm)$ be a noetherian local ring such that $\glind R<\infty$. We divide Theorem \ref{thm_main1} into two parts: the first concerns with the stability of $\lind_R (I^nM)$ and $\lind_R (I^nM/I^{n+1}M)$ for large $n$, and the second with the stability of $\lind_R (M/I^nM)$. The proof of the second part follows from the first part and a result due to Avramov on small homomorphisms. The first part follows from Theorem \ref{thm_largecomponents}.
\subsection{The first part of Theorem \ref{thm_main1}}
Before deducing the first part of Theorem \ref{thm_main1} from Theorem \ref{thm_largecomponents}, recall that $\Rees(I)=R\oplus I \oplus I^2 \oplus \cdots$ denote the Rees algebra of $I$, whose grading is given by $\deg I^n=n$. Since $R$ is noetherian, $\Rees(I)$ is a standard graded $R$-algebra.

\begin{proof}[Proof of the first part of Theorem \ref{thm_main1}]
Clearly $\bigoplus_{n\ge 0} I^nM$ and $\bigoplus_{n\ge 0} I^nM/I^{n+1}M$ are finitely generated graded modules over $\Rees(I)$. By Theorem \ref{thm_largecomponents}, we see that each of the sequences $(\lind_R I^nM)_n$ and $(\lind_R I^nM/I^{n+1}M)_n$ is eventually constant.
\end{proof}
To prove Theorem \ref{thm_largecomponents}, it is harmless to replace $S$ by a standard graded polynomial ring over $R$ which surjects onto it. Hence below, we will adopt the hypothesis that $S=R[y_1,\ldots,y_m]$ is a polynomial extension of $R$ with $\deg y_i=1$. With this hypothesis, we make Theorem \ref{thm_largecomponents} more precise by giving an upper bound for the smallest integer from which the sequence $(\lind_R C_n)_n$ becomes constant. This requires certain information about the minimal graded free resolution of $C$ as an $S$-module. 
\begin{defn}
\label{defn_pfunction}	
For each finitely generated graded $S$-module $C$, let $\pdeg_S(C)$ be the minimal number such that $C_i=0$ for all $i\ge \pdeg_S(C)$ or $C_i\neq 0$ for all $i\ge \pdeg_S(C)$. If $C=0$, we set $\pdeg_S (0)=-\infty$. Note that $\pdeg_S(C)$ is well-defined since $S$ is standard graded.
\end{defn}
We can compute the number $\pdeg_S(C)$ effectively, using two simple facts:
\begin{enumerate}
\item $\pdeg_S(C)=\pdeg_{S/\mm S}(C/\mm C)$. This is by Nakayama's lemma.
\item $\pdeg_{S/\mm S}(C/\mm C)$ is bounded above by the point where the Hilbert function and Hilbert polynomial of $C/\mm C$ as a module over $S/\mm S=k[y_1,\ldots,y_m]$, start to agree. The latter number is given, for example, in \cite[Proposition 4.12]{BH}.
\end{enumerate}

Given a finitely generated graded $S$-module $C$, define the constant $N(C)$ as follows. For $i=0$, denote $n(0)=\pdeg_S(C)$.

For $1\le i\le \min\{\glind R,\projdim_S C\}$, denote by $c(i,q)$ the number 
$$
c(i,q):=\pdeg_S\left(\Img \mu^{i,q} \right),
$$
where $\mu^{i,q}$ denotes the map $\Tor^S_i(S/\mm^{q+1}S,C)\to \Tor^S_i(S/\mm^qS,C)$.
Let
\[
\xymatrixcolsep{5mm}
\xymatrixrowsep{2mm}
\xymatrix{
	\cdots \ar[rr] &&  F_i \ar[rr]&&  F_{i-1}  \ar[rr]&&  \cdots \ar[rr] && F_0 \ar[rr] && 0
}
\]
be the minimal graded free resolution of $C$ over $S$; a reference for the existence of such a resolution is \cite[Section 1.5]{BH}. 

Denote by $M_i=\Img(F_i\to F_{i-1})$ the $i$-syzygy of $C$. Using the Artin-Rees lemma, choose $T(i)$ to be the minimal number $h\ge 1$ such that 
$$\mm^{q}F_{i-1}\cap M_i=\mm^{q-h}(\mm^hF_{i-1}\cap M_i)$$ 
for all $q\ge h$. Denote
$$
n(i)=\max\{c(i,1),\ldots,c(i,T(i))\}.
$$
Finally, let $N(C)=\max\left\{n(0),n(1),\ldots,n(\min\{\glind R,\projdim_S C\})\right\}$. Since $\glind R$ is a finite number, $N(C)$ is also finite. 
\begin{rem}
In principal, the numbers $T(i)$ in the definition of $N(C)$ should not be difficult to compute. Indeed, let $\nn$ denote the graded maximal ideal of the associated graded ring 
$$
\gr_{\mm S}(S)=\left(\gr_{\mm}R\right)\otimes_R S.
$$ 
In concrete terms, $\nn=(\mm/\mm^2)\otimes_R S$. Furthermore, denote by $K_i$ the kernel of the natural surjective map $\gr_{\mm S}(F_{i-1}) \to \gr_{\mm S}(F_{i-1}/M_i)$. Then there is an equality
\[
T(i)=\sup\{q: (K_i/\nn K_i)_q \neq 0\}.
\]
The proof is straightforward; see \cite[Proposition 2.1]{HWY} for an analogous statement.
\end{rem}
As explained above, we can deduce Theorem \ref{thm_largecomponents} from the next result
\begin{thm}
\label{thm_main2}
Let $(R,\mm)$ be a local ring with $\glind R<\infty$. Let $S$ be a standard graded polynomial ring over $R$ and $C$ a finitely generated graded $S$-module. Then for all $n\ge N(C)$, $\lind_R C_n$ is a constant independent of $n$.
\end{thm}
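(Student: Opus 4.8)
The plan is to read off $\lind_R C_n$ from Tor modules using \c{S}ega's criterion \cite{Se}: for a finitely generated module $M$ over a noetherian local ring $(R,\mm)$, one has $\lind_R M\le d$ if and only if the natural map
\[
\Tor^R_i(R/\mm^{q+1},M)\longrightarrow \Tor^R_i(R/\mm^q,M)
\]
vanishes for every $i>d$ and every $q\ge 1$. Since $C_n$ is a finitely generated $R$-module we have $\lind_R C_n\le\glind R<\infty$, so $\lind_R C_n$ is the (finite) supremum of those $i\ge 1$ for which this map, with $M=C_n$, is nonzero for some $q\ge 1$ (and is $0$ if no such $i$ exists; this is consistent with $\lind_R 0=0$, since all the relevant Tor modules vanish when $C_n=0$). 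Thus the statement reduces to showing that for $n\ge N(C)$ this supremum is independent of $n$, which will follow once we see that, for each $i$ in a suitable finite range, the property ``the transition map for $C_n$ is nonzero for some $q\ge 1$'' is eventually constant in $n$.

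First I would pass to graded components over $S$. As $S=R[y_1,\dots,y_m]$ is free over $R$, tensoring an $R$-free resolution of $R/\mm^q$ with $S$ gives an $S$-free resolution of $S/\mm^q S=(R/\mm^q)\otimes_R S$; together with the decomposition $C=\bigoplus_n C_n$ and the exactness of extracting graded components, this yields natural isomorphisms $\Tor^R_i(R/\mm^q,C_n)\cong[\Tor^S_i(S/\mm^q S,C)]_n$ carrying the transition maps to the degree-$n$ components of $\mu^{i,q}$. Hence the transition map for $C_n$ is nonzero exactly when $(\Img\mu^{i,q})_n\ne 0$; and since $\Img\mu^{i,q}$ is a finitely generated graded module over the noetherian ring $S$, the definition of $\pdeg_S$ gives that ``$(\Img\mu^{i,q})_n\ne 0$'' has the same truth value for all $n\ge c(i,q)=\pdeg_S(\Img\mu^{i,q})$. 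Only finitely many $i$ contribute: if $i>\projdim_S C$ then $\Tor^S_i(S/\mm^q S,C)=0$, and if $i>\glind R$ then $\mu^{i,q}_n=0$ for all $q,n$ since $\lind_R C_n\le\glind R$. So it remains, for $1\le i\le\min\{\glind R,\projdim_S C\}$, to deal with the quantifier over all $q\ge 1$.

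This is governed by the Artin-Rees number $T(i)$. Let $F_\bullet$ be the minimal graded $S$-free resolution of $C$, with $i$-th syzygy $M_i\subseteq F_{i-1}$. A computation with the complex $F_\bullet/\mm^q F_\bullet$ identifies $\Tor^S_i(S/\mm^q S,C)$, naturally in $q$, with $(\mm^q F_{i-1}\cap M_i)/\mm^q M_i$, with $\mu^{i,q}$ corresponding to the map induced by the inclusion $\mm^{q+1}F_{i-1}\cap M_i\hookrightarrow\mm^q F_{i-1}\cap M_i$; thus $\mu^{i,q}=0$ if and only if $\mm^{q+1}F_{i-1}\cap M_i\subseteq\mm^q M_i$, and likewise in every internal degree. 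Put $h=T(i)$ and $A=\mm^h F_{i-1}\cap M_i$; the defining property of $T(i)$ gives $\mm^{q+1}F_{i-1}\cap M_i=\mm^{q+1-h}A$ for all $q\ge h$. Hence $\mu^{i,h}=0$ says precisely that $\mm A\subseteq\mm^h M_i$, and then for every $q\ge h$
\[
\mm^{q+1}F_{i-1}\cap M_i=\mm^{q+1-h}A=\mm^{q-h}(\mm A)\subseteq\mm^{q-h}\mm^h M_i=\mm^q M_i,
\]
that is, $\mu^{i,q}=0$. So ``$\mu^{i,q}_n\ne 0$ for some $q\ge 1$'' is equivalent to ``$(\Img\mu^{i,q})_n\ne 0$ for some $q\in\{1,\dots,T(i)\}$'', and for $n\ge n(i)=\max\{c(i,1),\dots,c(i,T(i))\}$ this is a disjunction of finitely many conditions each independent of $n$, hence is independent of $n$. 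Taking the supremum over $1\le i\le\min\{\glind R,\projdim_S C\}$, and using $n\ge n(0)=\pdeg_S(C)$ to fix whether $C_n$ vanishes, shows $\lind_R C_n$ is constant for $n\ge N(C)$.

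The main obstacle I anticipate is the content of the last paragraph: checking that the identification of $\Tor^S_i(S/\mm^q S,C)$ with $(\mm^q F_{i-1}\cap M_i)/\mm^q M_i$ is natural in $q$ (so that it really turns $\mu^{i,q}$ into the inclusion-induced map), and that a single Artin-Rees number $T(i)$ controls the vanishing of $\mu^{i,q}$ for all $q$, uniformly in the internal grading. Granting these, the flat base change of Tor, the passage to graded components, and the bookkeeping with $\pdeg_S$ are routine.
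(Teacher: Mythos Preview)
Your proposal is correct and follows essentially the same route as the paper: \c{S}ega's criterion, the flat base-change isomorphism $\Tor^R_i(R/\mm^q,C_n)\cong\Tor^S_i(S/\mm^qS,C)_n$, the identification of $\Tor^S_i(S/\mm^qS,C)$ with $(\mm^qF_{i-1}\cap M_i)/\mm^qM_i$, the Artin--Rees bound $T(i)$ to reduce to finitely many $q$, and $\pdeg_S(\Img\mu^{i,q})$ to stabilize in $n$. The only organizational difference is that the paper fixes $e=\sup_{n\ge N}\lind_R C_n$, picks a witness $m$ with $\lind_R C_m=e$, and argues by contradiction in the case $\overline q\ge T(e)$, whereas you treat all $i$ in the range uniformly and run the Artin--Rees reduction forward; in particular the paper explicitly invokes $\mm\subseteq S_0$ to justify that the chain $\mm^{q+1}F_{i-1}\cap M_i=\mm^{q+1-h}A\subseteq\mm^q M_i$ can be read off degree by degree, which is exactly the ``uniformly in the internal grading'' point you flag at the end.
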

In the proof, we will use the following characterization of linearity defect due to \c{S}ega.
\newpage
\begin{thm}[\c {S}ega, {\cite[Theorem 2.2]{Se}}]
\label{thm_Sega}
Let $(R,\mm)$ be a noetherian local ring, $M$ a finitely generated $R$-module, and $d\ge 0$ an integer. The following statements are equivalent:
\begin{enumerate}[\quad \rm(i)]
\item $\lind_R M\le d$;
\item The natural morphism $\Tor^R_i(R/\mm^{q+1}, M)\longrightarrow \Tor^R_i(R/\mm^q, M)$ is the zero map for every $i>d$ and every $q\ge 0$.
\end{enumerate}
\end{thm}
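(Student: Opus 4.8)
The plan is to read both conditions off the minimal free resolution $F$ of $M$ and to compare them through a single spectral sequence built from the $\mm$-adic filtration. Recall that $\Tor^R_i(R/\mm^q,M)=H_i(F/\mm^q F)$ and that $\mu^{i,q}$ is induced by the canonical surjection $F/\mm^{q+1}F\to F/\mm^q F$, while $H_i(\linp^R F)$ is the homology of the linear-part differential on $\gr_\mm F$. First I would record the concrete meanings. By minimality $\partial(F)\subseteq\mm F$, so $\partial(\mm^q F_i)\subseteq\mm^{q+1}F_{i-1}$; hence the subquotient complex $\mm^q F/\mm^{q+1}F$ has \emph{zero} differential, with $H_i(\mm^q F/\mm^{q+1}F)=\mm^q F_i/\mm^{q+1}F_i$. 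This is the observation that glues the two sides together.

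The key device is the family of short exact sequences of complexes
\[
0\longrightarrow \mm^q F/\mm^{q+1}F\longrightarrow F/\mm^{q+1}F\longrightarrow F/\mm^q F\longrightarrow 0,\qquad q\ge 0 .
\]
Their long exact sequences assemble into an exact couple with $D^q_i=\Tor^R_i(R/\mm^q,M)$ and $E^q_i=\mm^q F_i/\mm^{q+1}F_i$, whose three structure maps are $j\colon E^q_i\to D^{q+1}_i$, the comparison map $\mu\colon D^{q+1}_i\to D^q_i$ of condition~(ii), and the connecting map $\delta\colon D^q_i\to E^q_{i-1}$, $[x]\mapsto[\partial x]$. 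The crucial identification, which I would verify by unwinding $\delta\circ j$, is that the induced differential $d_1=\delta j\colon E^q_i\to E^{q+1}_{i-1}$ is exactly the linear-part differential on $\gr_\mm F$; consequently $E_1=\linp^R F$ and $E_2=H(\linp^R F)$. Thus $\lind_R M\le d$ is equivalent to the vanishing of the $E_2$-page in all homological degrees $>d$. It is also worth noting, straight from the long exact sequence (where $\Img\mu=\ker\delta$), that $\mu^{i,q}=0$ if and only if $\delta^{i,q}$ is injective.

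With this dictionary the two implications separate cleanly. For (ii)$\Rightarrow$(i) I would pass to the derived couple: if $\mu^{i,q}=0$ for all $i>d$ and all $q$, then $D'=\Img\mu$ vanishes in homological degrees $>d$, and chasing the exact derived couple around $(E_2)^q_i$ — whose outgoing arrow lands in $(D')^{q+1}_i$ and whose incoming arrow issues from $(D')^{q-1}_{i+1}$, both zero once $i>d$ (the second because then $i+1>d$ as well) — squeezes $(E_2)^q_i=0$, i.e. $\lind_R M\le d$. For (i)$\Rightarrow$(ii) I would argue by a finite induction on the $\mm$-adic degree. Fix $i>d$ and a class of $\Tor^R_i(R/\mm^{q+1},M)$ represented by $x\in F_i$ with $\partial x\in\mm^{q+1}F_{i-1}$; using the vanishing of $H_i(\linp^R F)$ at internal degrees $t=0,1,\dots,q-1$ I subtract boundaries $\partial y_t$ (with $y_t\in\mm^{t-1}F_{i+1}$, and no correction at $t=0$, where the vanishing forces $x\in\mm F_i$ directly) to push the representative from $\mm^t F_i$ into $\mm^{t+1}F_i$ without altering $\partial x$. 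After $q$ steps the total correction $y=\sum_t y_t$ gives $x-\partial y\in\mm^q F_i$, so the image of the class in $\Tor^R_i(R/\mm^q,M)$ vanishes, which is exactly $\mu^{i,q}=0$.

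I expect the main obstacle to be the direction (ii)$\Rightarrow$(i), specifically producing a boundary correction of the correct $\mm$-adic order. Injectivity of $\delta^{i,q}$ furnishes some $y\in F_{i+1}$ with $x-\partial y\in\mm^{t+1}F_i$, but the graded linear-part statement demands $y\in\mm^{t-1}F_{i+1}$, and this filtration control is precisely what a single connecting-map argument cannot see; a naive descent on the order of $y$ threatens to become circular. Packaging the data as an exact couple and passing to the derived couple is what tracks the filtration degree automatically and circumvents this difficulty. Getting the degree bookkeeping of $\mu$, $j$, and $\delta$ exactly right — including the easy-to-miss but harmless vanishing of the $t=0$ boundary term — is the delicate point of the whole argument.
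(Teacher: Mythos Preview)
The paper does not supply its own proof of this statement: Theorem~\ref{thm_Sega} is quoted verbatim from \c{S}ega \cite[Theorem~2.2]{Se} and invoked as a black box in the proof of Theorem~\ref{thm_main2}. There is therefore nothing in the present paper to compare your argument against.

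That said, your approach is sound and is in fact the natural one. The exact couple built from the short exact sequences $0\to\mm^qF/\mm^{q+1}F\to F/\mm^{q+1}F\to F/\mm^qF\to 0$ does have $E_1$-page $\linp^R F$ with the linear-part differential, and your bookkeeping of the bidegrees of $j$, $\mu$, $\delta$ and of the derived couple is correct. The direction (ii)$\Rightarrow$(i) via vanishing of $D'=\Img\mu$ in homological degrees $>d$ is clean; the direction (i)$\Rightarrow$(ii) via the step-by-step $\mm$-adic lifting is the standard ``peeling off the linear part'' argument, and your handling of the boundary case $t=0$ (no correction, the cycle condition alone forces $x\in\mm F_i$) together with the trivial case $q=0$ (the target $\Tor^R_i(R,M)$ vanishes for $i>0$) is accurate. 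This is essentially how the result is established in \c{S}ega's paper, so had the authors chosen to include a proof it would have looked much like yours.
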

\begin{proof}[Proof of Theorem \ref{thm_main2}]
Since $S$ is a flat $R$-algebra, there is an isomorphism of $R$-modules
\begin{equation}
\label{eq_passtoS}
\Tor^R_i(R/\mm^q,C_n)\cong \Tor^S_i(S/\mm^qS, C)_n
\end{equation}
for all $i,q\ge 0, n\in \Z$.

Let $N=N(C)$ and $e=\sup \{\lind_R C_n: n\ge N\} \le \glind R<\infty$. We prove that $\lind_R C_n=e$ for any $n\ge N$. There is nothing to do if $e=0$, so we assume that $e\ge 1$. Note that, since $\Tor^S_i(S/\mm^qS,C)=0$ for $i>\projdim_S C$, Isomorphism \eqref{eq_passtoS} yields $e\le \min\{\glind R, \projdim_S C\}$.

Denote by $\mu^{e,q}_n$ the following map 
$$\Tor^S_e(S/\mm^{q+1}S,C)_n\longrightarrow \Tor^S_e(S/\mm^qS,C)_n.$$ Choose $m\ge N$ such that $\lind_R C_m=e$. Since $\lind_R C_m=e>e-1$, Theorem \ref{thm_Sega} implies that $\mu^{e,\overline{q}}_m\neq 0$ for some $\overline{q}\ge 0$. To prove the inequality $\lind_R C_n \ge e$, also by Theorem \ref{thm_Sega}, it suffices to show that $\mu^{e,q}_n\neq 0$ for some $q\ge 0$. 

Firstly, consider the case $\overline{q}< T(e)$. Since $n,m\ge N \ge c(e,\overline{q})$, the definition of $c(e,\overline{q})$ implies that $\mu^{e,\overline{q}}_n$ and $\mu^{e,\overline{q}}_m$ are both zero or both non-zero. This implies that $\mu^{e,\overline{q}}_n\neq 0$, as desired.

Secondly, consider the case $\overline{q}\ge T(e)$. Denote $T=T(e)$, we claim that $\mu^{e,T}_n\neq 0$. As $m,n\ge c(e,T)$, $\mu^{e,T}_m$ and $\mu^{e,T}_n$ are both zero or both non-zero, so it suffices to prove that $\mu^{e,T}_m\neq 0$. Assume the contrary, so $\mu^{e,T}_m=0$. Let $F$ be the minimal graded free resolution of $C$ over $S$. Denote $M_i=\Img(F_i\to F_{i-1})$, the $i$-th syzygy of $C$ as an $S$-module if $i\ge 1$ and $M_0=C$. Denote $M=M_e$ and $P=F_{e-1}$. Clearly 
\begin{align*}
\Tor^S_e(S/\mm^qS,C) &\cong \Tor^S_1(S/\mm^qS,M_{e-1}) \cong \frac{\mm^qP\cap M}{\mm^q M},\\
\Img \mu^{e,q}&\cong \frac{\mm^{q+1}P\cap M+\mm^q M}{\mm^q M}.
\end{align*}
 The equality $\mu^{e,T}_m=0$ then yields
\begin{equation}
\label{eq_m}
(\mm^{T+1}P\cap M+\mm^T M)_m=(\mm^T M)_m.
\end{equation}
We will show that $\mu^{e,\overline{q}}_m=0$. Indeed, 
\begin{align*}
\left(\mm^{\overline{q}+1}P\cap M +\mm^{\overline{q}}M\right)_m &=\left(\mm^{\overline{q}-T}(\mm^{T+1}P\cap M +\mm^TM)\right)_m\\
&= \mm^{\overline{q}-T}\left(\mm^{T+1}P\cap M +\mm^TM\right)_m\\
&= \mm^{\overline{q}-T}(\mm^TM)_m\\
&=(\mm^{\overline{q}}M)_m.
\end{align*}
In the above string, the first equality holds because of the inequality $\overline{q}\ge T=T(e)$ and the definition of $T(e)$, the second and fourth because $\mm\subseteq S_0$, the third because of \eqref{eq_m}.

Therefore, $\mu^{e,\overline{q}}_m=0$. But this is a contradiction, so the proof of the theorem is finished.
\end{proof}
\subsection{The second part of Theorem \ref{thm_main1}}
We state a simple result which is useful for the proof of the second part of Theorem \ref{thm_main1}, which concerns with the stability of the sequence $(\lind_R (M/I^nM))_n$.
\begin{lem}
\label{lem_mapofresol}
Let $0\longrightarrow M \longrightarrow P \longrightarrow N \longrightarrow 0$ be an exact sequence of non-trivial, finitely generated $R$-modules. Let $F, G$ be the minimal free resolution of $M, P$, respectively. Assume that there is a lifting $\vphi: F\longrightarrow G$ of $M\to P$ such that $\vphi(F)\subseteq \mm^2G$. Then there is an equality
\[
\lind_R N =\max\{\lind_R P, \lind_R M+1\}.
\]
\end{lem}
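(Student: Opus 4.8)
The plan is to build a mapping cone. Since $\varphi\colon F\to G$ lifts the inclusion $M\hookrightarrow P$, the mapping cone $\operatorname{cone}(\varphi)$ is a free resolution of $N$. The crucial point is that this resolution need not be minimal, but the hypothesis $\varphi(F)\subseteq \mm^2 G$ guarantees that the induced differential on $\operatorname{cone}(\varphi)$ has all entries in $\mm$, so $\operatorname{cone}(\varphi)$ \emph{is} in fact the minimal free resolution of $N$. Concretely, $\operatorname{cone}(\varphi)_i = G_i \oplus F_{i-1}$, with differential sending $(g,f)\mapsto (\partial^G g + \varphi(f), -\partial^F f)$; the off-diagonal block $\varphi$ contributes no units precisely because $\varphi(F)\subseteq \mm^2 G\subseteq \mm G$.

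Next I would compute the linear part. By the very construction of $\linp^R$, the linear part of a direct sum of complexes is the direct sum of linear parts; more to the point, since the linear part only retains the degree-one entries of the differential matrices, and the connecting block $\varphi$ has entries in $\mm^2$ (hence no linear entries at all), the linear part of $\operatorname{cone}(\varphi)$ splits as a complex:
\[
\linp^R \operatorname{cone}(\varphi) \;\cong\; \linp^R G \;\oplus\; \bigl(\linp^R F\bigr)[1].
\]
Here one must be slightly careful with the internal grading shift coming from the homological shift $[1]$, but this shift does not affect which homology modules vanish. Taking homology of a direct sum of complexes commutes with the direct sum, so
\[
H_i\bigl(\linp^R \operatorname{cone}(\varphi)\bigr) \;\cong\; H_i\bigl(\linp^R G\bigr) \;\oplus\; H_{i-1}\bigl(\linp^R F\bigr).
\]
Therefore $H_i(\linp^R \operatorname{cone}(\varphi))\ne 0$ if and only if either $H_i(\linp^R G)\ne 0$ or $H_{i-1}(\linp^R F)\ne 0$, and taking the supremum over $i$ gives exactly $\lind_R N = \max\{\lind_R P,\ \lind_R M + 1\}$, using that $\operatorname{cone}(\varphi)$ is the minimal resolution of $N$ as established in the first step. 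The hypotheses that $M$, $P$, $N$ are non-trivial ensure $F$, $G$ are genuinely nonzero so the two terms are not vacuously absent, and the convention $\lind_R 0 = 0$ would otherwise interfere.

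The main obstacle is the minimality claim for $\operatorname{cone}(\varphi)$: one needs that the \emph{entire} differential of the cone lands in $\mm\cdot(\text{cone})$, which uses minimality of both $F$ and $G$ (so the diagonal blocks $\partial^F,\partial^G$ already have entries in $\mm$) together with the hypothesis $\varphi(F)\subseteq \mm^2 G\subseteq \mm G$ for the off-diagonal block. Once minimality is secured, the computation of the linear part is essentially formal: the key structural observation is that $\varphi$ having \emph{no} linear entries — which is where $\mm^2$ rather than merely $\mm$ is needed — makes the linear part of the cone split as a direct sum rather than remaining a genuine cone. I would also double-check that the homological shift in $\linp^R F[1]$ carries the correct internal degree twist so that it is again an honest complex of graded $\gr_\mm R$-modules; this is bookkeeping, not a real difficulty.
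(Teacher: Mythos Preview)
Your proof is correct and follows the same approach as the paper: build the mapping cone, observe it is minimal because $\varphi(F)\subseteq\mm^2G$, then use that the $\varphi$-block contributes nothing to the linear part so $\linp^R$ of the cone splits as a direct sum. The only discrepancy is the sign convention on the homological shift (the paper writes $(\linp^R F)[-1]$ where you write $[1]$), which is purely notational and does not affect the argument.
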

\begin{proof}
Let $W$ be the mapping cone of $\vphi$. Since $\vphi(F)\subseteq \mm^2G$, $W$ is a minimal free resolution of $N$. By simple computations, we get a direct sum decomposition
\[
\linp^R W\cong \linp^R G \oplus (\linp^RF)[-1]
\]
Hence by accounting, $\lind_R N=\max\{\lind_R P, \lind_R M+1\},$ as desired.
\end{proof}
Combining the first part of Theorem \ref{thm_main1} with a result due to Avramov \cite{Avr1}, we will provide the
\begin{proof}[Proof of the second part of Theorem \ref{thm_main1}]
We have to prove the eventual constancy of the sequence $(\lind_R (M/I^nM))_n$. If there exists an $n_0\ge 1$ such that $I^{n_0}M=0$ then $I^nM=0$ for all $n\ge n_0$, and we are done. Hence below, we assume that $I^nM\neq 0$ for all $n\ge 1$.

By \cite[Corollary A.4]{Avr1}, there exists $d\ge 1$ such that for any $P\subseteq \mm^dM$, the map $\Tor^R_i(k,P)\to \Tor^R_i(k,M)$ is zero for all $i\ge 0$. Applying the same result, there exists $e\ge 1$ such that for any $P\subseteq \mm^{d+e}M$, the map $\Tor^R_i(k,P) \to \Tor^R_i(k,\mm^dM)$ is zero for all $i\ge 0$.

Take $n\ge d+e$. Then the maps $\Tor^R_i(k, I^nM) \to \Tor^R_i(k,I^dM)$ and $\Tor^R_i(k,I^dM) \to \Tor^R_i(k,M)$ are zero for all $i\ge 0$. Let $F, G, H$ be the minimal free resolution of $I^nM, I^dM, M$, respectively. Take any lifting $\lambda: F\to G$ of the map $I^nM\to I^dM$, then $\lambda(F)\subseteq \mm G$. Similarly, for any lifting $\psi: G\to H$ of the map $I^dM\to M$, we have $\psi(G) \subseteq \mm H$. Therefore we obtain a lifting $\phi=\psi\circ \lambda: F\to H$ on the level of minimal free resolutions of the map $I^nM \to M$ which satisfies $\phi(F) \subseteq \mm^2 G$.

By Lemma \ref{lem_mapofresol}, we have for any $n\ge d+e$ the equality
\[
\lind_R (M/I^nM)=\max\{\lind_R M, \lind_R (I^n M)+1\}.
\]
By the first part of Theorem \ref{thm_main1}, for $n$ large enough, $\lind_R (I^nM)$ is a constant independent of $n$. Hence the same is true for $\lind_R (M/I^nM)$. This concludes the proof of the second part of Theorem \ref{thm_main1}.
\end{proof}
\subsection{Final remarks}
\label{subsect_finalrem}
The following example illustrates the value of the constant $N=N(C)$ in Theorem \ref{thm_main2}, with the help of Macaulay2 \cite{GS}. 
\begin{ex}
\label{ex_firststep}
Let $R=\Q[x,y,z]$ be a polynomial ring of dimension $3$ and $I=(x^2,xy,z^2)$. Denote $S=R[w_0,w_1,w_2]$ a standard graded polynomial extension of $R$ which surjects onto the Rees algebra $E=\Rees(I)$ by mapping $w_0\mapsto x^2, w_1 \mapsto xy, w_2 \mapsto z^2$. The ring $E$ has the following presentation
$$
E\cong \frac{S}{(w_0y-w_1x,w_0z^2-w_2x^2,w_1z^2-w_2xy)}.
$$
The minimal graded free resolution of $E$ over $S$ is as follows

\[
F: 0\rightarrow \begin{array}{c}
S(-2)\\
\bigoplus \\
S(-1)\\
\end{array}\xrightarrow{\left(
	\begin{array}{c c}
	w_2x & -z^2\\
         -w_1 & y\\   	
         w_0 & -x\\
        \end{array}\right)} S(-1)^3 \xrightarrow{\left(
		\begin{array}{c c c}
w_0y-w_1x & w_0z^2-w_2x^2 & w_1z^2-w_2xy\\
		\end{array}
				\right)} S \rightarrow 0.
\]

Using the notation of the proof of Theorem \ref{thm_main2}, we will show that $N=1$, namely all the powers of $I$ have the same linearity defect, which turns out to be $1$. Since $\projdim_S E=2<\glind R=3$, $N=\max\{n(0),n(1),n(2)\}$. The graded structure of $E$ tells us that $n(0)=\pdeg_S(E)=0$.

Let $J\subseteq S,M_2\subseteq G$ be the first and second syzygies of $E$, where $G$ denotes the module $F_1=S(-1)^3$. We claim that $T(1)=2$ and $T(2)=1$, namely,
\begin{align}
\label{eq_T_numbers1}
\mm^qS\cap J &=\mm^{q-2}(\mm^2S\cap J), ~\text{for all $q\ge 2$},\\
\mm^qG\cap M_2&=\mm^{q-1}(\mm G\cap M_2),~\text{for all $q\ge 1$}. \label{eq_T_numbers2}
\end{align}

For \eqref{eq_T_numbers1}: one sees immediately that both sides are equal to $\mm^{q-1}(w_0y-w_1x)S+\mm^{q-2}(w_0z^2-w_2x^2, w_1z^2-w_2xy)S$.

For \eqref{eq_T_numbers2}: we have $M_2=(w_2xe_1-w_1e_2+w_0e_3,-z^2e_1+ye_2-xe_3)$, where $e_1,e_2,e_3$ is the standard basis of $G$ sitting in degree $1$. It is not hard to check that both sides of \eqref{eq_T_numbers2} are equal to
\[
\mm^q(w_2xe_1-w_1e_2+w_0e_3)+\mm^{q-1}(-z^2e_1+ye_2-xe_3).
\]

The above arguments yield $n(1)=\max\{c(1,1),c(1,2)\}$ and $n(2)=c(2,1)$. We prove that $n(1)=1$ and $n(2)=-\infty$.

For each $q\ge 1$, $\Tor^S_1(S/\mm^qS,E)=\Tor^S_1(S/\mm^qS,S/J)=(J\cap \mm^qS)/(J\mm^qS)$. Therefore the image of $\Tor^S_1(S/\mm^{q+1}S,E)\to \Tor^S_1(S/\mm^qS,E)$ is $\Img \mu^{1,q}=(J\cap \mm^{q+1}S+J\mm^qS)/(J\mm^qS)$. Computations show that
\[
\Img \mu^{1,1}=\frac{S^2}{\mm S^2+(w_0e^1_1-w_1e^1_2)},
\]
where $e^1_1,e^1_2$ is a basis for $S^2$, both of degree $1$, and
\[
\Img \mu^{1,2}=\frac{S^5}{\mm S(e^2_3,e^2_4,e^2_5)+\mm^2S(e^2_1,e^2_2)+(-xe^2_1+ye^2_2,w_0e^2_1-w_1e^2_2+w_2e^2_5)},
\]
where $e^2_1,\ldots,e^2_5$ are a basis for $S^5$, all of them of degree $1$. Thanks to routine Gr\"obner basis arguments, the residue classes $\overline{w_0^ie^1_2} \in \Img \mu^{1,1}$ and $\overline{w_0^ie^2_2}\in \Img \mu^{1,2}$ are always non-zero for every $i\ge 0$. Hence $c(1,1)=c(1,2)=1$, and thus $n(1)=1$.

Denote by $f_1,f_2$ the standard basis of $F_2$ where $\deg f_1=2,\deg f_2=1$. Since $\Tor^S_2(S/\mm^S,E)=H_2(F\otimes_S S/\mm^2S)$, computations show that
\begin{enumerate}
\item $\Tor^S_2(S/\mm^S,E)$ is generated by $\overline{xf_2}, \overline{yf_2}, \overline{zf_2} \in F_2\otimes (S/\mm^2S)$,
\item $\Tor^S_2(S/\mm S, E)$ is generated by $\overline{f_2} \in F_2\otimes (S/\mm S)$.
\end{enumerate} 
As $\Tor^S_2(S/\mm S, E)$ is killed by $\mm S$, the map $\Tor^S_2(S/\mm^S,E) \to \Tor^S_2(S/\mm S, E)$ is zero; this yields $n(2)=c(2,1)=-\infty$.

Putting everything together, $N=\max\{n(0),n(1),n(2)\}=\max\{0,1,-\infty\}=1$.
\end{ex}
Theorem \ref{thm_main2} has the following consequence on the linearity defect of the integral closure of powers. Recall that the integral closure $\overline{I}$ of $I$ is the ideal consisting of elements $x\in R$ which satisfies a relation
\[
x^n+a_1x^{n-1}+\cdots+a_{n-1}x+a_n=0
\]
where $n\ge 1$ and $a_i\in I^i$ for any $1\le i\le n$.
\begin{cor}
\label{cor_integralcl}
Let $(R,\mm)$ be a regular local ring and $I\subseteq \mm$ an ideal. Then the sequence $(\lind_R \overline{I^n})_n$ is eventually constant.
\end{cor}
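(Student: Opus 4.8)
The plan is to recognize $\bigoplus_{n\ge 0}\overline{I^n}$ as a finitely generated graded module over a standard graded $R$-algebra and then quote Theorem~\ref{thm_largecomponents}. Concretely, consider the Rees algebra $\Rees(I)=\bigoplus_{n\ge 0}I^n$; since $R$ is noetherian this is a standard graded $R$-algebra. Put $C=\bigoplus_{n\ge 0}\overline{I^n}$. The inclusions $I^a\cdot\overline{I^b}\subseteq\overline{I^{a+b}}$ (a standard property of integral closures of ideals) make $C$ a graded $\Rees(I)$-module, with $C_n=\overline{I^n}$ by construction. If $C$ is finitely generated over $\Rees(I)$, then, because $R$ is regular and hence $\glind R<\infty$ (as recalled after the statement of Theorem~\ref{thm_main1}), Theorem~\ref{thm_largecomponents} applies with $S=\Rees(I)$ and this $C$, and gives at once that $(\lind_R\overline{I^n})_n=(\lind_R C_n)_n$ is eventually constant.

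The whole matter thus reduces to the finite generation of $C$ over $\Rees(I)$, which I expect to be the one genuine point. This holds because a regular local ring is analytically unramified: its completion is again regular, hence a domain, hence reduced; equivalently, $R$ is excellent, so its integral closure in any finite extension of its fraction field is module-finite. Under this hypothesis Rees's theorem provides an integer $n_0$ with $\overline{I^{n+1}}=I\cdot\overline{I^n}$ for all $n\ge n_0$. Consequently $C$ is generated as a $\Rees(I)$-module by the finitely many elements of $\overline{I^0}\oplus\overline{I^1}\oplus\cdots\oplus\overline{I^{n_0}}$, each $\overline{I^j}$ being a finitely generated ideal of the noetherian ring $R$; so $C$ is a finitely generated graded $\Rees(I)$-module. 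Alternatively, one may invoke directly the classical statement that the integral closure $\overline{\Rees(I)}$ of $\Rees(I)$ inside $R[t]$ is module-finite over $\Rees(I)$ and that its degree-$n$ component is exactly $\overline{I^n}$.

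With finite generation established, the proof of the corollary is immediate by Theorem~\ref{thm_largecomponents}. If one prefers an effective version, replacing $\Rees(I)$ by a standard graded polynomial extension $R[y_1,\dots,y_m]\twoheadrightarrow\Rees(I)$ and applying Theorem~\ref{thm_main2} even yields an explicit threshold $N(C)$ beyond which $\lind_R\overline{I^n}$ is constant; but for the qualitative assertion nothing beyond Theorem~\ref{thm_largecomponents} is required.
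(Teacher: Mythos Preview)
Your proposal is correct and follows essentially the same route as the paper: form $C=\bigoplus_{n\ge 0}\overline{I^n}$ as a graded $\Rees(I)$-module, invoke finite generation of $C$ (the paper cites \cite[Proposition 5.3.4]{HS}, while you spell out the analytically-unramified/Rees-theorem justification), and apply Theorem~\ref{thm_largecomponents} using $\glind R<\infty$ for regular $R$. The extra remarks on an effective bound via Theorem~\ref{thm_main2} are sound but go beyond what the paper records.
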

\begin{proof}
Denote $C=R\oplus \overline{I} \oplus \overline{I^2} \oplus \cdots$, then $C$ is a graded module over $\Rees(I)$ with $\deg \overline{I^n}=n$. By \cite[Proposition 5.3.4]{HS}, $C$ is a finitely generated $\Rees(I)$-module. An application of Theorem \ref{thm_largecomponents} yields the desired conclusion.
\end{proof}

Recall that the saturation of $I$ is $\widetilde{I}=\{x\in R: x\mm^d \subseteq I ~\text{for some $d\ge 1$}\}$. The next example shows that the (graded) analog of Corollary \ref{cor_integralcl} for saturation of powers does not hold. 
\begin{ex}
\label{ex_periodic}
Consider the ideal $I=(x(y^3-z^3),y(x^3-z^3),z(x^3-y^3)) \subseteq R=\mathbb C[x,y,z]$. The ideal $I$ defines a reduced set of $12$ points in $\mathbb P^2$, the so-called {\it Fermat configuration} (see the proof of \cite[Proposition 2.1]{HaS}).  We show that the saturation ideals $\widetilde{I^s}$ do not have eventually constant linearity defect.
	
For $s\ge 1$, denote by $I^{(s)}=R\cap \bigcap_{P\in \Ass(I)}I^sR_P$ the $s$-th symbolic power of $I$. Since $I$ is the defining ideal of a reduced set of points, we get that $\widetilde{I^s}=I^{(s)}$. From \cite[Proposition 1.1]{HaS}, we deduce that $\widetilde{I^{3s}}=(\widetilde{I^3})^s$ . By \cite[Theorem 2.4]{HHO}, $\lind_R \widetilde{I^{3s}}=0$ for all $s\ge 1$. Indeed, computations with Macaulay2 \cite{GS} show that $x,y+z,z$ is a $d$-sequence with respect to $\Rees(\widetilde{I^3})$. 
	
Now we show that $\lind_R \widetilde{I^{3s+1}}=1$ for all $s\ge 1$. First, since $\depth R/\widetilde{I^{3s+1}}\ge 1$, by \cite[Proposition 6.3]{CINR},  
$$
\lind_R R/\widetilde{I^{3s+1}}\le \dim R-1=2.
$$
Hence $\lind_R \widetilde{I^{3s+1}} \le 1$.

Let $H=(x^3-y^3)(y^3-z^3)(z^3-x^3)$. We will show that the minimal non-zero component of $\widetilde{I^{3s+1}}=I^{(3s+1)}$ is of degree $9s+4$ and 
$$
I^{(3s+1)}_{\left<9s+4\right>}=(H^s)I_{\left<4\right>}\cong I(-9s).
$$
If this is the case, then $\widetilde{I^{3s+1}}_{\left<9s+4\right>}$ has linearity defect at least 1, as $I$ does. (For the inequality $\lind_R I\ge 1$, use R\"omer's theorem \cite[Theorem 5.6]{IyR} and the fact that $I$ is generated in degree $4$ but does not have $4$-linear resolution). Hence $\lind_R \widetilde{I^{3s+1}}\ge 1$ for every $s\ge 1$. All in all, we obtain $\lind_R \widetilde{I^{3s+1}}=1$ for every $s\ge 1$.

Now for our purpose, it suffices to prove the following claim:
\begin{equation}
\label{eq_saturation_equality}
I^{(3s+1)}_{\left<d\right>}=(H^s)I_{\left<d-9s\right>} 
\end{equation}
holds for all $d\le 9s+4$. We are grateful to Alexandra Seceleanu for providing us the following nice argument.

We will proceed by induction on $s$; the starting case $s$ = 0 is trivial. Assume that $s >0$. 

Let $G$ be a homogeneous element of $I^{(3s+1)}$ of degree $d$. Here the geometry of the Fermat configuration comes into play. We have a decomposition $H=\prod_{i=1}^9 h_i$, where each $h_i$ is a linear form and no two of them are proportional. According to \cite[Section 1.1]{HaS}, for each $i$, $h_i$ passes through exactly $4$ points (among the $12$ points of the configuration). Moreover, each point of the configuration lies on $3$ of the $9$ lines defined by the $h_i$s. 

Now as $G$ lies in $I^{(3s+1)}$, $G$ passes through each point of the configuration with multiplicity at least $3s+1$. Thus the curves $(G)$ and $(h_i)$ intersect with multiplicity at least $4(3s+1)$, which is strictly larger than $d=(\deg G)\cdot(\deg h_i)$. From that, Bezout's theorem forces $G$ to be divisible by $h_i$ for all $1\le i\le 9$. In particular $G$ is divisible by $H$. Writing $G = H G'$, then as $H$ vanishes exactly $3$ times at each of the points, we must have $G' \in I^{(3s+1-3)}_{\left<d-9\right>}=I^{(3(s-1)+1)}_{\left<d-9\right>}$. Finally, the induction hypothesis gives us the claim.
	
So we conclude that the sequence $\lind_R \widetilde{I^{s}}$ is not eventually constant when $s$ goes to infinity.
\end{ex}
\begin{rem}
\label{rem_periodic}
More generally than Theorem \ref{thm_largecomponents}, one can prove the following: If $S$ is a noetherian $R$-algebra which is generated by elements of positive degrees, and $C$ is a finitely generated graded $S$-module, then the sequence $(\lind_R C_n)_n$ is quasiperiodic, i.e. there exist a number $p\ge 1$ and integral constants $\ell_0,\ldots,\ell_{p-1}$ such that for all $n\gg 0$, we have $\lind_R \widetilde{C_n}=\ell_i$ if $n$ is congruent to $i \in\{0,\ldots,p-1\}$ modulo $p$. 

The proof uses the fact that any high enough Veronese subring of $S$ is standard graded (after normalizing the grading), and Theorem \ref{thm_largecomponents}. We leave the details to the interested reader (see an analogous statement in \cite[Theorem 4.3]{CHT}).

By \cite[Theorem 4.3]{NS}, for the ideal $I$ in Example \ref{ex_periodic}, the graded $R$-algebra $R\oplus \widetilde{I} \oplus \widetilde{I^2}\oplus \cdots$ is finitely generated. This fact and the above general version of Theorem \ref{thm_largecomponents} guarantee the quasiperiodic behavior of the sequence $(\lind_R \widetilde{I^n})_n$ in the example.
\end{rem}
We do not know any example where the sequence $(\lind_R \widetilde{I^n})_n$ is not quasiperiodic. In view of \cite[Example 4.4]{CHT} on bad behavior of regularity for saturations of powers, it is desirable to seek for one.

\section*{Acknowledgments}
The content of this paper has been worked out when the first author was visiting the Department of Mathematics, University of Nebraska -- Lincoln (UNL) in March 2015. We are grateful to Luchezar Avramov, Dale Cutkosky, Hailong Dao, Thomas Marley and Alexandra Seceleanu for inspiring comments and useful suggestions. The first author would like to thank the colleagues at Department of Mathematics of the UNL, among them Luchezar Avramov, Roger and Sylvia Wiegand for their hospitality and warm support. Finally, both authors are grateful to the anonymous referee for her/his careful reading of the manuscript and many thoughtful comments which greatly improve the presentation.

\end{document}